\newcommand{\ZZ}{\mathbb{Z}}
\newcommand{\ep}{\varepsilon}
\newcommand{\cI}{\mathcal I}
\newcommand{\tH}{\widetilde H}
\numberwithin{equation}{section}
\begin{document}

\title{A Hardy-Ramanujan-Rademacher-type
formula for $(r,s)$-regular partitions
\thanks{This work was partially supported by a grant from the Simons Foundation (\#209175 to James Mc Laughlin).
The second author is supported in part by National Security Agency grant H98230-11-1-0190.}}



\author{James Mc Laughlin   \and  Scott Parsell}


\institute{J. Mc Laughlin \and S. Parsell \at
Mathematics Department,
West Chester University, 25 University Avenue,
West Chester, PA 19383 \\
\email{jmclaughl@wcupa.edu, sparsell@wcupa.edu}}

\date{Received: date / Accepted: date}

\maketitle

\begin{abstract}
Let $p_{r,s}(n)$ denote the number of partitions of a positive
integer $n$ into parts containing no multiples of $r$ or $s$, where
$r>1$ and $s>1$ are square-free, relatively prime integers.
We use classical methods to derive a Hardy-Ramanujan-Rademacher-type
infinite series for $p_{r,s}(n)$.
\keywords{$q$-series \and partitions \and circle-method \and
Hardy-Ramanujan-Rademacher}
\subclass{Primary 11P82 \and Secondary 05A17 \and 11L05 \and 11D85 \and 11P55 \and 11Y35}
\end{abstract}

\section{Introduction}

A \emph{partition} of a positive integer $n$ is a representation of $n$ as a sum of
positive integers, where the order of the summands does not matter.
We use $p(n)$ to denote the number of partitions of $n$, so that, for example,
$p(4)=5$, since 4 may be represented as 4, $3+1$, $2+2$, $2+1+1$ and $1+1+1+1$.
The function $p(n)$ increases rapidly with $n$, and it is difficult to compute
$p(n)$ directly for large $n$.

Rademacher \cite{Rad:37}, by slightly modifying earlier work of Hardy
and Ramanujan \cite{HR:18}, derived a remarkable infinite series for
$p(n)$. To describe this series we need some notation.
 Recall that the Dedekind sum $s(e,f)$ is defined by
\[
s(e,f):=\sum_{r=1}^{f-1}\frac{r}{f}\left(\frac{er}{f}-\left \lfloor
\frac{er}{f}\right \rfloor - \frac{1}{2}\right),
\]
and for ease of notation, we use  $\omega(e,f)$ to denote $\exp(\pi
i\,s(e,f))$, and for a positive integer $k$, set \[
A_k(n):=\sum_{\substack{0\leq h<k \\ (h,k)=1}}\omega(h,k)e^{-2\pi i n
h/k}.
\]
We recall also that $$I_{\nu}(z) = \sum_{m=0}^{\infty} \frac{(\tfrac{1}{2}z)^{\nu+2m}}{m! \Gamma(\nu+m+1)}$$
denotes the modified Bessel function of the first kind.

\begin{theorem}(Rademacher)
If $n$ is a positive integer, then
\begin{equation}\label{rad1}
p(n)=\frac{2\pi}{(24n-1)^{3/4}}\sum_{k=1}^{\infty}\frac{A_k(n)}{k}
I_{3/2}\left(\frac{\pi}{k}\sqrt{\frac{2}{3}\left (n-\frac{1}{24} \right)} \right).
\end{equation}
\end{theorem}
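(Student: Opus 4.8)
The plan is to use the Hardy--Littlewood circle method in the sharpened form introduced by Rademacher, built on a Farey dissection of the unit circle into arcs of Ford circles, together with the modular transformation law for Dedekind's eta function. First I would record the generating function
\[
F(q):=\sum_{n=0}^{\infty}p(n)q^{n}=\prod_{m=1}^{\infty}\frac{1}{1-q^{m}},\qquad |q|<1,
\]
and note that with $q=e^{2\pi i\tau}$ one has $F(q)=e^{\pi i\tau/12}/\eta(\tau)$. By Cauchy's theorem,
\[
p(n)=\frac{1}{2\pi i}\oint_{\mathcal C}\frac{F(q)}{q^{n+1}}\,dq,
\]
where $\mathcal C$ is a circle of radius $e^{-2\pi/N^{2}}$ centered at the origin, with $N$ a large integer to be sent to infinity at the end. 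The key qualitative fact is that $F(q)$ blows up as $q$ tends to a root of unity $e^{2\pi i h/k}$, with the order of blow-up controlled by the denominator $k$.

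Next I would perform a Farey dissection of order $N$: the relevant roots of unity are $e^{2\pi i h/k}$ with $(h,k)=1$, $0\le h<k\le N$, and I would deform $\mathcal C$ into a path consisting of one arc $\gamma_{h,k}$ of a Ford circle facing each Farey point $h/k$. On $\gamma_{h,k}$ I substitute $q=e^{2\pi i(h/k+iz)}$ and apply the transformation formula for $\eta$ under the modular substitution associated with $hh'\equiv-1\pmod k$; this rewrites $F\!\bigl(e^{2\pi i(h/k+iz)}\bigr)$ in terms of $F\!\bigl(e^{2\pi i(h'/k+i/(k^{2}z))}\bigr)$, producing a factor $\omega(h,k)$ (the branch of the root and the precise exponential prefactor $\exp\bigl(\tfrac{\pi}{12k^{2}z}-\tfrac{\pi z}{12}\bigr)$ must be tracked carefully). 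Expanding $F$ at the far cusp as its constant term $1$ plus a remainder splits the integrand into a \emph{main part} and an \emph{error}; summing the main parts over all Farey arcs and exploiting the exact geometry of the Ford circles, I would recognize each batch as a Hankel-type loop integral, and using the loop representation underlying $I_{\nu}(z)=\sum_{m\ge0}(z/2)^{\nu+2m}/(m!\,\Gamma(\nu+m+1))$ evaluate it \emph{exactly} as
\[
\frac{2\pi}{(24n-1)^{3/4}}\,\frac{A_{k}(n)}{k}\,I_{3/2}\!\left(\frac{\pi}{k}\sqrt{\tfrac{2}{3}\bigl(n-\tfrac{1}{24}\bigr)}\right),
\]
which is precisely the $k$-th term of \eqref{rad1}, the exponents of the root and of the Bessel index being forced by the $\tfrac{1}{24}$ appearing in $F(q)=e^{\pi i\tau/12}/\eta(\tau)$.

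Finally I would estimate the error. Using elementary bounds for $F$ (equivalently for $\eta$) on the far side of each Ford arc, where the nome is bounded away from $1$ but may approach other roots of unity, together with the length estimate $|\gamma_{h,k}|\ll 1/(kN)$, I would show that the total error contribution over all $O(N^{2})$ arcs is $O(N^{-1/2})$; letting $N\to\infty$ then removes it and yields convergence of the series to $p(n)$. The main obstacle is exactly this uniform control of the error: one must bound $F$ on the complementary arcs uniformly in $h$, $k$, and $z$, and then sum these contributions against the Farey-spacing bounds so that nothing accumulates. A secondary technical point is the bookkeeping in the $\eta$-transformation needed to be certain that the Dedekind-sum phases assemble into exactly $\omega(h,k)$ and hence into $A_{k}(n)$, and the verification that replacing the arc integrals in the main term by integrals over the full Ford circles introduces only an additional, controllable error.
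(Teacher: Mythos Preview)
The paper does not prove this statement: it is Rademacher's theorem, quoted as background in the introduction and attributed to \cite{Rad:37} without proof. Your outline is the standard Rademacher circle-method argument and is correct as a sketch; it also coincides with the template the paper follows in Sections~\ref{it}--\ref{mt} to establish its own Theorem~\ref{t2} (Cauchy integral, Ford-circle dissection, $\eta$-transformation, main/error split, and Bessel-function evaluation of the main terms). So there is nothing in the paper to compare against for this particular statement, but your approach matches both the classical proof and the methodology the paper uses for its main result.
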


Rademacher's series converges incredibly fast.
For example, $$p(500)=2,300,165,032,574,323,995,027,$$ and yet six terms of the
series are sufficient to get within 0.5 of $p(500)$.  The idea of course is
that if a partial sum is known to be within $0.5$ of the value of the series, then the
nearest integer gives the exact value of $p(n)$.

Since the publication of Rademacher's paper \cite{Rad:37}, a number
of authors have found series similar to \eqref{rad1} for certain
restricted partition functions. Lehner \cite{Lehner:41} found such
series for $p_1(n)$ and $p_2(n)$, the number of partitions of $n$
into parts $\equiv \pm 1 \pmod 5$ and $\equiv \pm 2 \pmod 5$
respectively, and this was extended by Livingood \cite{L45} to
series for $p_1(n), \dots , p_{(q-1)/2}(n)$, the number of
partitions into parts $\equiv \pm 1 \pmod q$, $\equiv \pm 2 \pmod q,$ $\dots ,$
$\equiv \pm (q-1)/2 \pmod q$ respectively, where $q>3$ is an odd
prime. Hua \cite{H42} derived a Rademacher-type series for $p_O(n)$,
the number of partitions of $n$ into odd parts.

Let $q\geq 3$ be an odd prime and  $a=\{a_1,a_2,\dots ,a_r\}$ be a
set of distinct integers satisfying $1\leq a_i\leq (q-1)/2$. Hagis
\cite{Hagis:62} gave a Hardy-Ramanujan-Rademacher-type series
(H.R.R. series) for $p_a(n)$, the number of partitions of $n$ into
parts $\equiv \pm a_i \pmod q$. In a subsequent series of
papers \cite{Hagis:63,Hagis:64,Hagis:64b,Hagis:65,Hagis:66,Hagis:71,Hagis:71b},
Hagis also developed similar series for other
restricted partition functions (into odd parts, odd distinct parts,
no part repeated more than $t$ times, {\it etc.}).

Niven \cite{Niven:40} gave a H.R.R. series for $p_{2,3}(n)$, the
number of partitions of $n$ into parts containing no multiples of
$2$ or $3$. In a similar vein, Haberzetle \cite{Hab:41} gave a
series for $p_{q_1,q_2}(n)$, the number of partitions of $n$ into
parts containing no multiples of $q_1$ or $q_2$, where $q_1$ and
$q_2$ are distinct primes such that $24|(q_1-1)(q_2-1)$.

Iseki \cite{I:59,I:60,I:61} derived H.R.R. series that, amongst
other results, extended the result of Livingood \cite{L45} cited
above from a prime $q$ to a composite integer $M$, and also extended
the results of Niven  \cite{Niven:40} and Haberzetle \cite{Hab:41},
by finding a H.R.R. series for $p_M(n)$, the number of partitions of
$n$ into parts relatively prime to a square-free positive integer
$M$.

Sastri {\it et al.} \cite{PS01,S72,SV82}  derived a number of H.R.R.
series which, amongst other results, extended  the result of Hagis
cited above from a prime $q$ to an arbitrary positive integer $m$.

More recently, Sills \cite{S10a,S10b,S10c}  has partly automated
the process of finding H.R.R. series for restricted partition
functions, and aided by the use of the computer algebra system
\emph{Mathematica}, has found many new such series, including ones
for restricted partition functions represented by various identities
of Rogers-Ramanujan type.

When $r>1$ and $s>1$ are relatively prime integers, let $p_{r,s}(n)$
denote the number of partitions of $n$ into parts containing no multiples of
$r$ or $s$.  We say that such a partition of an integer $n$ is
\emph{$(r,s)$-regular}. In the present paper we give a H.R.R.
series for $p_{r,s}(n)$ when $r$ and $s$ are square-free.
We note that this result includes those
Niven \cite{Niven:40} and Haberzetle \cite{Hab:41} as special cases.

We now state our result explicitly. Define \[
F(\tau)=\frac{1}{\prod_{k=1}^{\infty}(1-e^{2\pi i k \tau})},
\]
and denote by $H_{i,j}$ a solution to the congruence $iH_{i,j}\equiv
-1 \pmod j$, and for consistency of notation below, set $H_{0,1}=0$.
For integers $k$, $r$ and $s$, let $r_k:=\gcd(r,k)$ and
$s_k:=\gcd(s,k)$ and, for ease of notation, set
\begin{equation}\label{Rdelkeq}
R:=\frac{(r-1)(s-1)}{24}, \hspace{25pt}
\delta_k:=\frac{(r/r_k-r_k)(s/s_k-s_k)}{24}.
\end{equation}
Our result may be stated as follows.

\begin{theorem}\label{t2}
Let $r>1$ and $s>1$ be square-free relatively prime integers. For a
positive integer $k$ and non-negative integer $h$ with $(h,k)=1$,
define the sequence $\{c_m(h,k)\}$ by
\begin{equation*}
\frac{ F\! \left(\! \frac{H_{h,k}}{k}+\frac{i }{z} \! \right) \!
F\! \left(\! \frac{H_{hrs/(r_ks_k),k/(r_ks_k)}}{k/(r_ks_k)}+\frac{ir_k^2s_k^2
}{rsz} \! \right)} {F\! \left(\! \frac{H_{hr/r_k,k/r_k}}{k/r_k}+\frac{ir_k^2
}{rz} \! \right) \! F\! \left(\! \frac{H_{hs/s_k,k/s_k}}{k/s_k}+\frac{i s_k^2
}{sz} \! \right)} \! := \! \sum_{m=0}^{\infty} \! c_m(h,k)\exp\biggl(\! \frac{-2\pi m
r_k s_k}{r s z} \! \biggr).
\end{equation*} If $n >R$, then
\begin{equation*}\label{t2eq}
p_{r,s}(n)=\sum_{k=1}^{\infty}\sum_{m=0}^{\lfloor \delta_k\rfloor}
\! \frac{2\pi A_{k,m}(n)}{k} \! \sqrt{  \frac{r_ks_k(\delta_k-m)}{rs(n-R)}
} I_1\biggl(\frac{4\pi}{k}\! \sqrt{\frac{r_ks_k}{rs}(\delta_k-m)(n-R) }\biggr),
\end{equation*}
where
\begin{equation*}
A_{k,m}(n):=\sum_{\substack{h=0 \\ (h,k)=1}}^{k-1}
\frac{\omega(h,k)\omega(hrs/(r_ks_k),k/(r_ks_k))}
{\omega(hr/r_k,k/r_k)\omega(hs/s_k,k/s_k)}
c_m(h,k)\exp\left(\frac{-2\pi i n h}{k} \right).
\end{equation*}
\end{theorem}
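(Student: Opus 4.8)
The plan is to carry out Rademacher's refinement of the Hardy--Ramanujan circle method. The starting point is the generating function: since $\gcd(r,s)=1$, a positive integer fails to be counted by $p_{r,s}$ exactly when it is a multiple of $r$ or of $s$, and divisibility by $r$ or by $s$ is governed, without multiplicity, by divisibility by $r$, by $s$, and by $rs$; inverting this gives
\[
\sum_{n\ge0}p_{r,s}(n)q^{n}=\prod_{\substack{j\ge1\\ r\nmid j,\ s\nmid j}}\frac{1}{1-q^{j}}
=\frac{F(\tau)F(rs\tau)}{F(r\tau)F(s\tau)}=:\mathcal F(\tau),\qquad q=e^{2\pi i\tau},
\]
which is holomorphic and nonvanishing on $|q|<1$. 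Starting from $p_{r,s}(n)=\frac{1}{2\pi i}\oint\mathcal F(\tau)q^{-n}\,\frac{dq}{q}$ over the circle $|q|=e^{-2\pi/N^{2}}$, I would deform the contour onto the Ford circles $C(h,k)$ attached to the Farey fractions $h/k\in\mathcal F_{N}$, and on the arc at $h/k$ substitute $\tau=\frac{h}{k}+\frac{iz}{k^{2}}$, so that $z$ runs along an arc of $|z-\tfrac12|=\tfrac12$ and $\frac{1}{2\pi i}\frac{dq}{q}=\frac{i}{k^{2}}\,dz$.

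The computational heart is the Dedekind $\eta$-transformation applied to each of the four factors of $\mathcal F$. For $F(\tau)=e^{\pi i\tau/12}/\eta(\tau)$ one uses the modular substitution $\tau\mapsto\bigl(H_{h,k}\tau-\tfrac{1+hH_{h,k}}{k}\bigr)/(k\tau-h)$, which carries $\tau$ to $\tfrac{H_{h,k}}{k}+\tfrac{i}{z}$ and produces a factor $\omega(h,k)$, a factor $(iz/k)^{1/2}$, and an exponential $\exp\bigl(\tfrac{\pi}{12}(\tfrac{1}{z}-\tfrac{z}{k^{2}})\bigr)$. For $F(r\tau)$ one first rewrites $r\tau=\tfrac{(r/r_{k})h}{k/r_{k}}+\tfrac{i(r/r_{k})z}{r_{k}(k/r_{k})^{2}}$ and notes that, \emph{because $r$ is square-free}, $\gcd(r/r_{k},k/r_{k})=1$ --- this is precisely where the hypothesis enters --- so the same transformation applies and yields $F\bigl(\tfrac{H_{hr/r_{k},k/r_{k}}}{k/r_{k}}+\tfrac{ir_{k}^{2}}{rz}\bigr)$; likewise for $F(s\tau)$, and for $F(rs\tau)$ using $\gcd(rs,k)=r_{k}s_{k}$. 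Collecting the four transformed pieces: the $\omega$-factors assemble into precisely the ratio appearing in $A_{k,m}(n)$; the factors $(iz/k)^{\pm1/2}$ cancel; and the exponentials combine, via the elementary identities $(1-\tfrac{r_{k}^{2}}{r})(1-\tfrac{s_{k}^{2}}{s})=\tfrac{24\delta_{k}r_{k}s_{k}}{rs}$ and $(r-1)(s-1)=24R$, into $\exp\bigl(\tfrac{2\pi\delta_{k}r_{k}s_{k}}{rsz}-\tfrac{2\pi Rz}{k^{2}}\bigr)$. What remains is exactly the quotient of the four transformed $F$'s, which by definition equals $\sum_{m\ge0}c_{m}(h,k)\exp\bigl(-\tfrac{2\pi mr_{k}s_{k}}{rsz}\bigr)$. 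Multiplying by $q^{-n}=e^{-2\pi inh/k}e^{2\pi nz/k^{2}}$ gives, on the arc at $h/k$,
\[
\mathcal F(\tau)q^{-n}=e^{-2\pi inh/k}\,
\frac{\omega(h,k)\omega(hrs/(r_{k}s_{k}),k/(r_{k}s_{k}))}{\omega(hr/r_{k},k/r_{k})\omega(hs/s_{k},k/s_{k})}
\sum_{m\ge0}c_{m}(h,k)\exp\Bigl(\tfrac{2\pi(n-R)z}{k^{2}}+\tfrac{2\pi(\delta_{k}-m)r_{k}s_{k}}{rsz}\Bigr).
\]

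Next I would insert this into the dissected integral and interchange the summations, so that for each $k$ and $m$ the sum over $h$ with $(h,k)=1$ builds $A_{k,m}(n)$. Following Rademacher, each Ford-circle arc is completed --- at the cost of an error term --- to the standard contour on which the integral $\oint\exp\bigl(\tfrac{2\pi(n-R)z}{k^{2}}+\tfrac{2\pi(\delta_{k}-m)r_{k}s_{k}}{rsz}\bigr)\,dz$ is evaluated by the Hankel-type integral representation of the Bessel function $I_{1}$ --- this requires $n>R$, so that the coefficient of $z$ is positive --- giving for each $(k,m)$ the term $\tfrac{2\pi}{k}\sqrt{\tfrac{r_{k}s_{k}(\delta_{k}-m)}{rs(n-R)}}\,I_{1}\bigl(\tfrac{4\pi}{k}\sqrt{\tfrac{r_{k}s_{k}}{rs}(\delta_{k}-m)(n-R)}\bigr)$. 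Terms with $\delta_{k}-m\le0$ contribute nothing to this main sum: when $\delta_{k}-m=0$ (possible only if $\delta_{k}\in\ZZ_{\ge0}$) the Bessel factor is $I_{1}(0)=0$, and when $\delta_{k}<0$ the inner sum over $m$ is empty; in both cases such terms are exponentially small near the cusps and pass into the error. This is why the inner summation terminates at $m=\lfloor\delta_{k}\rfloor$.

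I expect the error analysis to be the main obstacle and to occupy the bulk of the argument. One must: (i) track the branches of the factors $(iz/k)^{\pm1/2}$ closely enough to confirm their cancellation and that the multiplier is the stated ratio of $\omega$'s; (ii) bound the transformed quotient $F(\cdot)F(\cdot)/\bigl(F(\cdot)F(\cdot)\bigr)$, and in particular its tail $\sum_{m>\delta_{k}}c_{m}(h,k)\exp(-2\pi mr_{k}s_{k}/(rsz))$, uniformly for $z$ on the Ford circles --- using that each $F$ in the denominator stays bounded away from $0$ there and that $\operatorname{Re}(1/z)$ is large near the point of tangency --- so as to control the contribution of the exponents with $\delta_{k}-m<0$; and (iii) carry out the classical Rademacher estimates for the integrals over the complementary arcs and for the tail $k>N$, then let $N\to\infty$ and verify absolute convergence of the final double series over $k$ and $m$. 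Throughout, the hypothesis $n>R$ is essential: it is exactly what gives the exponent $\tfrac{2\pi(n-R)z}{k^{2}}$, and hence the argument of the Bessel function, the sign required for these estimates to succeed.
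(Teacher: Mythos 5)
Your formal derivation coincides with the paper's: the generating function, the Ford-circle dissection, the substitution $\tau=h/k+iz/k^2$, the four $\eta$-transformations (square-freeness giving $\gcd(v/v_k,k/v_k)=1$), the cancellation of the square-root factors, the combination of the exponentials into $\exp\bigl(2\pi r_ks_k(\delta_k-m)/(rsz)+2\pi(n-R)z/k^2\bigr)$, and the final evaluation over the completed circle by the Hankel-type integral for $I_1$ are exactly Sections 2 and 5 of the paper. The genuine gap is in your error analysis, where you propose to "carry out the classical Rademacher estimates." Precisely because the square-root factors cancel (the generating function is a weight-zero modular form), there is no leftover factor $|z|^{1/2}\ll (k/N)^{1/2}$ in the integrand, and absolute-value estimates over $h$ fail: for $\delta_k\le m\le 2\delta_k$ one has $|\Psi_{m,k}(z)|\asymp 1$ on the arcs, each arc has length $\asymp k/N$, so bounding each $h$ separately gives a contribution of order $\sum_{k\le N}\tfrac{1}{k^2}\cdot\phi(k)\cdot\tfrac{k}{N}=O(1)$, which does not tend to $0$ as $N\to\infty$; the same trivial count shows that the error incurred in completing the main-term arcs ($m<\delta_k$) to the full circle $K(-)$ is also only $O(1)$. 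This is exactly the point where the $p(n)$ argument does not transfer, and why Lehner, Niven, Haberzetle, Hagis and the present paper all need more.

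What is missing is cancellation in the sum over $h$. The paper's Section 3 supplies it: after removing the restriction $H_{h,k}\in\mathcal I_l$ coming from the arc endpoints at a cost $O(\log k)$ (an orthogonality/geometric-series argument), it computes the multiplier $\Omega_{h,k}$ explicitly as a Jacobi symbol times exponentials linear in $h$ and in $H_{h,Bk}$ (via Niven's formulas for $\omega(vh/g,k/g)$ --- another place where square-freeness is essential, e.g.\ to know $hv/g$ is odd when $k$ is even and to choose $H_{h,Bk}$ divisible by $A$), recognizes the resulting $h$-sum as a Kloosterman sum modulo $Bk$ whose relevant invariant $576rs(n-R)$ is nonzero (using $n>R$ again), and applies Weil's bound to obtain $\Theta(k,l,m)\ll k^{1/2+\varepsilon}$; this saving of $k^{1/2}$ is what makes both $P_2(n;N)$ and the contour-completion error $O(N^{-1/2+\varepsilon})$. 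To set this up you also need two ingredients absent from your sketch: (a) the coefficients in your expansion depend on $h$, and one must show $c_m(h,k)=c_{m,k}\exp\bigl(2\pi i m r_ks_kH_{h,k}/(rsk)\bigr)$ with $c_{m,k}$ independent of $h$ (the paper's Chinese Remainder Theorem argument, again using square-freeness), so that after interchanging the $m$-expansion with the $h$-sum the $m$-dependence enters only through an additive character compatible with the Kloosterman structure; and (b) a growth bound such as $c_{m,k}\ll e^{2\pi\sqrt m}$, uniform in $k$, to control the tail over $m$. Without these the passage to the limit $N\to\infty$ is not justified, so the proposal as written is incomplete at its central analytic step.
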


The method of proof follows to a large extent the method used by
previous authors to derive similar convergent series for other
partition functions. In section \ref{it}, the Cauchy Residue Theorem
is applied to the generating function for the sequence $p_{r,s}(n)$, and
a change of variable is then applied to convert the path of
integration to the line segment $[i,i+1]$. Next, this line segment
is deformed to follow the path along the top of a collection of Ford
circles, after which another change of variable transforms the arc
along the top of each Ford circle to an arc along the circle in the
complex plain with center 1/2 and radius 1/2. Next, the
transformation formula for the Dedekind eta function $\eta(\tau)$ is
used to transform the integrand into a form whose properties can be
exploited to derive the final series stated in Theorem \ref{t2}.
Each transformed infinite product is expanded in a series, which is
broken into an initial finite part (which eventually leads to the
series of the theorem) and a tail, whose contribution is shown to be
negligible.

The path of integration for each of the terms coming from the tail
of the series mentioned above is divided into three arcs.  In
section \ref{ks}, Kloosterman sum estimates are developed, which are
used in section \ref{et} to get error bounds on the integrals along
the three arcs for each term in the tail.  This shows that these error
terms go to zero as $N\to \infty$, where $N$ is the order of the
Farey sequence giving rise to the collection of Ford circles.

In section \ref{mt}, the arcs of integration along the circle with
center 1/2 and radius 1/2 for the main terms are replaced with a new
path along the entire circle. It is shown that the contributions
from the additional arcs also go to zero as
$N\to \infty$, where $N$ is as in the paragraph above. Two other
changes of variable and an application of an integral formula for
modified Bessel functions of the first kind lead the final result.

Remark: With the notation for $F(\tau)$ as above and for
$\eta(\tau)$ as below, the generating functions
\[
e^{-\pi i(r-1)(s-1))\tau/12} \frac{F(\tau)F(r s
\tau)}{F(r\tau)F(s\tau)}=\frac{\eta(r\tau)\eta(s\tau)}{\eta(\tau)\eta(r
s \tau)}
\]
are weight-zero modular forms, so that the general theorem of
Bringmann and Ono \cite{BO11} could in theory be used to derive our
series for $p_{r,s}(n)$. However, we prefer to employ the
Hardy-Ramanujan-Rademacher method.

\section{Initial transformations}\label{it}

Write $(q;q)_{\infty} = \prod_{j=1}^{\infty} (1-q^j)$, and let
\begin{equation}\label{Geq}
G(x)=\sum_{n=0}^{\infty}p_{r,s}(n)x^n
=\frac{(x^r;x^r)_{\infty}(x^s;x^s)_{\infty}}{(x;x)_{\infty}(x^{rs};x^{rs})_{\infty}}
\end{equation}
denote the generating function for the sequence $\{p_{r,s}(n)\}$. By
the Cauchy Residue Theorem,
\[
p_{r,s}(n)=\frac{1}{2\pi i}\int_C \frac{G(x)}{x^{n+1}}dx,
\]
where $C$ is any positively oriented simple closed curve inside the
unit circle containing the origin. As usual, we start by taking $C$
to be the circle centered at the origin with radius $e^{-2\pi}$, and
make the change of variable $x=e^{2\pi i \tau}$ to get
\[
p_{r,s}(n)=\int_{i}^{i+1}G(e^{2\pi i \tau})e^{-2\pi i n \tau}d\tau.
\]
We follow Rademacher by deforming the path of integration so that it
traces the upper arcs of the collection of Ford circles
\[
\left \{C_{h,k}:\frac{h}{k} \in \mathcal{F}_N\right \},
\]
where $C_{h,k}$ is the circle with center $h/k+i/(2k^2)$ and radius
$1/(2k^2)$, and $\mathcal{F}_N$ is the set of Farey fractions of
order $N$. We denote  the part of the path that is an arc of the
circle $C_{h,k}$ by $\gamma(h,k)$. Thus
\begin{equation}\label{prseq2}
p_{r,s}(n)=\sum_{k=1}^N\sum_{\substack{h=0 \\ (h,k)=1}}^{k-1}\int_{\gamma(h,k)}\frac{
F(\tau)F(r s \tau)e^{-2\pi i n \tau}}{F(r\tau)F(s\tau)}d\tau.
\end{equation}
Next, for each circle $C_{h,k}$, set $z=-ik^2(\tau-h/k)$,
transforming the circle $C_{h,k}$ to the circle $K$ with center $1/2$
and radius $1/2$, and transforming the arc $\gamma(h,k)$ to the arc
(not passing through $0$) on the latter circle joining the points
\[
z_1(h,k)=\frac{k^2+ikk_1}{k^2+k_1^2} \qquad \mbox{and} \qquad z_2(h,k)=\frac{k^2-ikk_2}{k^2+k_2^2},
\]
where $h_1/k_1<h/k<h_2/k_2$ are consecutive Farey fractions in
$\mathcal{F}_N$. With these changes,
\begin{equation}\label{prseq3}
p_{r,s}(n)= \sum_{k=1}^N\sum_{\substack{h=0 \\ (h,k)=1}}^{k-1}
\int_{z_1(h,k)}^{z_2(h,k)}\frac{
F\left(\frac{h}{k}+\frac{iz}{k^2}\right)F\left(r s
\left(\frac{h}{k}+\frac{iz}{k^2}\right)\right)e^{-2\pi i n
\left(\frac{h}{k}+\frac{iz}{k^2}\right)}\,i}
{F\left(r\left(\frac{h}{k}+\frac{iz}{k^2}\right)\right)
F\left(s\left(\frac{h}{k}+\frac{iz}{k^2}\right)\right)k^2}dz.
\end{equation}

Next, recall that the Dedekind eta function is defined by
\[
\eta(\tau) =e^{\pi i \tau/12}\prod_{k=1}^{\infty}(1-e^{2\pi i k
\tau})
\]
and satisfies the transformation formula (see for example Apostol \cite{Apostol:MF}, Theorem 3.4)
\begin{equation}\label{etatrans}
\eta\left(\frac{a\tau+b}{c \tau+d}\right)=\exp\left(\pi
i\left(\frac{a+d}{12c}+s(-d,c) \right)\right)\{-i(c\tau
+d)\}^{1/2}\eta(\tau)
\end{equation}
whenever $\biggl(\!  \begin{array}{c c} a & b \\ c & d \end{array}\!  \biggr)$ is an element of the modular group,
$c > 0$, and $\tau$ lies in the upper half-plane .  Thus
\begin{multline*}
F(\tau)=
\exp\left(\frac{\pi i}{12}\left(\tau - \frac{a\tau+b}{c
\tau+d}+\frac{a+d}{c}\right) \right) \\ \times \exp\left(\pi i s(-d,c) \right)
\{-i(c\tau +d)\}^{1/2}F\left(\frac{a\tau+b}{c \tau+d}\right).
\end{multline*}

In what follows, for each set of choices for $a$, $c$ and $d$, we
take $b$ to be $(a d - 1)/c$. For $v\in\{1,r,s,rs\}$,
$(k,rs)=r_ks_k$ and $\tau=h/k+i z/k^2$, we set $v_k=(v,k)$, so that
$(v,v_k)\in\{(1,1),(r,r_k),(s,s_k),(rs,r_ks_k)\}$.  We then transform
$F(v\tau)$ by setting $c=k/v_k$, $d=-hv/v_k$ and
$a=H_{hv/v_k,k/v_k}$, to get
\begin{multline}\label{Ftransz}
F\left(\frac{v h}{k}+\frac{i v z}{k^2}\right)= \exp\left(\frac{\pi
v_k^2}{12vz}-\frac{\pi v z}{12k^2} \right)\exp\left(\pi i\,
s\left(\frac{hv}{v_k},\frac{k}{v_k}\right) \right)\\
\times\left\{\frac{vz}{kv_k}\right\}^{1/2}F\left(\frac{H_{hv/v_k,k/v_k}}{k/v_k}+\frac{i
v_k^2}{vz} \right).
\end{multline}
On substituting into (\ref{prseq3}), this gives
\begin{multline}\label{prseq4}
p_{r,s}(n)=  \sum_{k=1}^N \sum_{\substack{h=0 \\ (h,k)=1}}^{k-1}
\frac{\omega(h,k)\omega(hrs/(r_ks_k),k/(r_ks_k))}
{\omega(hr/r_k,k/r_k)\omega(hs/s_k,k/s_k)} \exp\left(\frac{-2\pi i n
h}{k} \right)\frac{i}{k^2}
\\
\times \int_{z_1(h,k)}^{z_2(h,k)}\!\!
\exp\left(2\pi \left(\frac{r_ks_k\delta_k}{rsz}+\frac{(n-R)z}{k^2}
\right) \right)\\
\times\frac{ F\left(\frac{H_{h,k}}{k}+\frac{i }{z} \right)
F\left(\frac{H_{hrs/(r_ks_k),k/(r_ks_k)}}{k/(r_ks_k)}+\frac{ir_k^2s_k^2
}{rsz} \right)} {F\left(\frac{H_{hr/r_k,k/r_k}}{k/r_k}+\frac{ir_k^2
}{rz} \right) F\left(\frac{H_{hs/s_k,k/s_k}}{k/s_k}+\frac{i s_k^2
}{sz} \right)}dz.
\end{multline}

We temporarily fix $v \in \{1,r,s,rs\}$ and introduce the shorthand $g=v_k=(v,k)$.  We observe that
the congruences
$$H_{vh/g,k/g} (vh/g) \equiv -1 \pmod{k/g} \qquad \mbox{and} \qquad H_{h,k} h \equiv -1 \pmod{k}$$
imply that
\begin{equation*}
\label{HHrel} vH_{vh/g,k/g} \equiv g H_{h,k} \pmod{k}
\end{equation*}
when $(h,k)=1$.  Since $r$ and $s$ are square-free, we have
$(v/g,k)=1$, and hence the congruence
$$h(v/g) \tH_{h,k} \equiv -1 \pmod{k}$$
has a solution $\tH_{h,k}$, and we are free to take $H_{h,k} =
(v/g)\tH_{h,k}$ to be a multiple of $v/g$.  In particular, then, one
has $vH_{vh/g,k/g} \equiv gH_{h,k} \pmod{v}$, and since $(v/g,k)=1$
it follows from (\ref{HHrel}) and the Chinese Remainder Theorem that
\begin{equation*}
\label{HHrel2} vH_{vh/g,k/g} \equiv gH_{h,k} \pmod{vk/g}.
\end{equation*}
Hence the periodicity of $F(\tau)$ implies that
\begin{equation*}
\label{Ftrans2}
F\left(\frac{gH_{vh/g,k/g}}{k}+\frac{ig^2}{vz}\right) =
F\left(\frac{g^2 H_{h,k}}{vk}+\frac{ig^2}{vz}\right).
\end{equation*}

Put
$$\mu_z=\frac{r_ks_k}{rs}\left(\frac{H_{h,k}}{k}+\frac{i}{z}\right).$$
Then we deduce from (\ref{Ftrans2}) that the ratio appearing in
(\ref{prseq4}) is
\begin{equation}
\label{Fratio}
\frac{F((rs/r_ks_k)\mu_z)F(r_ks_k
\mu_z)}{F((r_ks/s_k)\mu_z)F((s_kr/r_k)\mu_z)} := G^*(\mu_z),
\end{equation}
where we write
\begin{equation}
\label{G*def} G^*(\tau) = \sum_{m=0}^{\infty} c_{m,k} \exp(2\pi i m
\tau)
\end{equation}
for some coefficients $c_{m,k}$.  We note that the coefficients
$c_m(h,k)$ occurring in the statement of Theorem \ref{t2} satisfy
$$c_m(h,k)=c_{m,k} \exp\biggl(\frac{2\pi i m r_ks_k H_{h,k}}{rsk}\biggr),$$
so that in particular $|c_m(h,k)|=|c_{m,k}|$.
Then (\ref{prseq4}) may be expressed as
\begin{multline*}
p_{r,s}(n)=\sum_{\substack{h,k \\ (h,k)=1}} \frac{i}{k^2}
\Omega_{h,k} e^{-2\pi i n h/k} \\ \times \int_{z_1(h,k)}^{z_2(h,k)}
\exp\left(2\pi\biggl(\frac{r_ks_k\delta_k}{rsz}+
\frac{(n-R)z}{k^2}\biggr)\right) G^*(\mu_z) dz,
\end{multline*}
where
\begin{equation}
\label{Omegadef} \Omega_{h,k} = \frac{\omega(h,k)
\omega(rsh/(r_ks_k),k/(r_ks_k))}{\omega(rh/r_k,k/r_k)\omega(sh/s_k,k/s_k)}.
\end{equation}
We further introduce the notation
\begin{equation}
\label{Psidef} \Psi_{m,k}(z)=\exp\left(\frac{2\pi
r_ks_k(\delta_k-m)}{rsz}+\frac{2\pi(n-R) z}{k^2}\right),
\end{equation}
which allows us to write
\begin{multline}
\label{prsformula} p_{r,s}(n) = \sum_{k=1}^N \frac{i}{k^2}
\sum_{m=0}^{\infty} c_{m,k} \sum_{\substack{0 \leq h \leq k-1 \\
(h,k)=1}} \Omega_{h,k} \exp\left(\frac{2\pi
i}{rsk}(r_ks_kmH_{h,k}-rsnh)\right) \\  \times \int_{z_1(h,k)}^{z_2(h,k)}
\Psi_{m,k}(z) \, dz.
\end{multline}

We decompose the sum over $m$ into two parts, $m < \delta_k$ and $m \geq \delta_k$, and write
\begin{equation}
\label{prsdecomp} p_{r,s}(n) = P_1(n; N) + P_2(n; N)
\end{equation}
for the resulting decomposition of (\ref{prsformula}).  We aim to
show that $P_2(n; N)$ contributes a negligible amount to the
formula. We find it useful to split the path of integration from
$z_1(h,k)$ to $z_2(h,k)$ into the three arcs $[z(k_1),z(N)]$,
$[z(-N),z(-k_2)]$, and $[z(-N),z(N)]$, and we further decompose the
first two as unions of arcs of the shape $[z(l), z(l+1)]$, where
\begin{equation}
\label{zldef} z(l) = \frac{k^2}{k^2+l^2} + \frac{ikl}{k^2+l^2}.
\end{equation}
It is easy to check that each $z(l)$ lies on the circle
$|z-1/2|=1/2$. Since $k_1, k, k_2$ are denominators of consecutive
elements in the Farey sequence of order $N$, we have $k+k_1 \geq N+1$
and $k+k_2 \geq N+1$, and hence $k_1 \geq N+1-k$ and $-k_2 \leq k-N-1$
for all values of $h$ and $k$.  Moreover, since $hk_1-h_1k =
h_2k-hk_2 = 1$, we have $hk_1 \equiv 1 \pmod{k}$ and $hk_2 \equiv -1
\pmod{k}$.  It follows that $H_{h,k} \equiv k_2 \equiv -k_1
\pmod{k}$, and hence the condition $-k_2 \leq l \leq k_1-1$ is
equivalent to a restriction of $H_{h,k}$ to some interval $\cI_l$
modulo $k$. We may therefore interchange the order of summation and
integration in (\ref{prsformula}) to obtain
\begin{equation}
\label{Ebd1} P_2(n; N) = \sum_{k=1}^N \frac{i}{k^2} \sum_{m=\lceil
\delta_k \rceil}^{\infty} c_{m,k} (S_1(m,k)+S_2(m,k)+S_3(m,k)),
\end{equation}
where
$$S_1=\sum_{l=N+1-k}^{N-1} \int_{z(l)}^{z(l+1)} \Psi_{m,k}(z)
\Theta(k,l,m)\,dz,$$
$$S_2 = \sum_{l=-N}^{k-N-2}
\int_{z(l)}^{z(l+1)}\Psi_{m,k}(z)\Theta(k,l,m) \, dz,$$
$$S_3 =  \int_{z(-N)}^{z(N)} \Psi_{m,k}(z) \Theta(k,l,m) \, dz,$$
and
$$\Theta(k,l,m)=\sum_{\substack{0 \leq h \leq k-1 \\ (h,k)=1 \\ H_{h,k} \in \cI_l}}
 \Omega_{h,k} \exp\left(\frac{2\pi i}{rsk}(r_ks_kmH_{h,k}-rsnh)\right).$$
In order to make further progress, we must develop suitable estimates for $\Theta(k,l,m)$.
We take up this task in the next section.

\section{Kloosterman sums}\label{ks}

In order to estimate $S_1$, $S_2$, and $S_3$, we aim to express
$\Theta(k,l,m)$ in terms of Kloosterman sums. As a first step, we
are able to remove the restriction on $H_{h,k}$ in the summation at
a cost of $O(\log k)$. The argument is similar to that of Hagis
\cite{Hagis:71} (see also Lehner \cite{Lehner:41}).

\begin{lemma} For each $k$ and $m$, there exists an integer $j=j(k,m)$ with $0 \leq j \leq k-1$ such that for every $l$ one has
\label{Thetabd1}
\begin{equation*}
|\Theta(k,l,m)| \ll (1+\log k) \left|\sum_{\substack{h=0 \\(h,k)=1}}^{k-1} \Omega_{h,k} \exp\biggl(\frac{2\pi
i}{rsk}((r_ks_km+rsj)H_{h,k}-rsnh)\! \biggr) \right|,
\end{equation*}
where the implicit constant is absolute.
\end{lemma}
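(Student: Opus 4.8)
The plan is to detect the congruence restriction $H_{h,k}\in\cI_l$ by Fourier analysis on $\ZZ/k\ZZ$ and then to absorb the resulting additive characters into the exponential phase already present in $\Theta(k,l,m)$. Since $(h,k)=1$, the residue $H_{h,k}\bmod k$ is well defined, and by the discussion preceding the lemma the condition $H_{h,k}\in\cI_l$ is exactly the requirement that $H_{h,k}\bmod k$ lie in a fixed set of consecutive residues modulo $k$. I would write $\mathbf 1_{\cI_l}$ for the indicator of this set and expand it in additive characters,
\[
\mathbf 1_{\cI_l}(x)=\sum_{j=0}^{k-1}a_j(l)\,e^{2\pi i jx/k},\qquad a_j(l)=\frac1k\sum_{x\in\cI_l}e^{-2\pi i jx/k},
\]
then substitute this into the definition of $\Theta(k,l,m)$ and interchange the two finite summations. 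Since $j$ is an integer, $e^{2\pi i jH_{h,k}/k}=\exp\bigl(2\pi i\,rsjH_{h,k}/(rsk)\bigr)$ depends only on $H_{h,k}\bmod k$, so it merges with the phase already in $\Theta$ to give
\[
\Theta(k,l,m)=\sum_{j=0}^{k-1}a_j(l)\sum_{\substack{0\le h\le k-1\\(h,k)=1}}\Omega_{h,k}\exp\!\left(\frac{2\pi i}{rsk}\bigl((r_ks_km+rsj)H_{h,k}-rsnh\bigr)\right).
\]

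The second step is the only one with real content: a uniform $L^1$ bound on the Fourier coefficients. Trivially $|a_0(l)|\le 1$, and for $1\le j\le k-1$ summing the finite geometric series gives $|a_j(l)|\le\bigl(k\,|\sin(\pi j/k)|\bigr)^{-1}$; using $\sin(\pi j/k)\ge 2j/k$ for $1\le j\le k/2$ and pairing $j$ with $k-j$ then yields
\[
\sum_{j=0}^{k-1}|a_j(l)|\le 1+\sum_{1\le j\le k/2}\frac1j\ll 1+\log k,
\]
with an absolute implied constant and, crucially, uniformly in $l$ and in the interval $\cI_l$. This is the classical estimate for the $L^1$ norm of a Dirichlet-type kernel and is precisely the device used by Lehner \cite{Lehner:41} and Hagis \cite{Hagis:71} in the analogous lemmas cited above, so I do not expect any genuine obstacle here.

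To finish, I would observe that the inner sum over $h$ in the displayed formula for $\Theta(k,l,m)$ does not involve $l$ at all. Taking absolute values, bounding each $|a_j(l)|$ by the $L^1$ sum just established, and letting $j=j(k,m)\in\{0,\dots,k-1\}$ be a value of $j$ that maximizes the modulus of that inner sum, one obtains exactly the claimed inequality, with the dependence on $l$ eliminated. The one bookkeeping point that needs a moment's care is that the factor $\exp\bigl(2\pi i\,r_ks_kmH_{h,k}/(rsk)\bigr)$ inside $\Theta$ depends on the particular integer representative chosen for $H_{h,k}$ rather than on $H_{h,k}\bmod k$ alone; this causes no difficulty, since the same fixed representative is used on both sides of the identity and the newly introduced factor $e^{2\pi i jH_{h,k}/k}$, being $2\pi i$ times an integer multiple of $H_{h,k}/k$, genuinely descends to $\ZZ/k\ZZ$.
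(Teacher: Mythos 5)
Your proposal is correct and follows essentially the same route as the paper's own proof: you detect the condition $H_{h,k}\in\cI_l$ by expanding the indicator of the interval in additive characters modulo $k$ (the paper's orthogonality step, with your $a_j(l)$ equal to the paper's $\gamma_j/k$), bound these coefficients by the geometric-series/Dirichlet-kernel estimate $\sum_j|a_j(l)|\ll 1+\log k$, and then choose $j=j(k,m)$ maximizing the $l$-independent inner sum over $h$. Your closing remark about the representative of $H_{h,k}$ is a valid bookkeeping observation and does not change the argument.
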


\begin{proof}
Fix $k$, $l$, and $m$, and let $\cI_l = [\alpha,\beta]$, where
$\alpha$ and $\beta$ are integers with $0 \leq \beta -\alpha < k$.
By orthogonality, we have
$$\frac{1}{k} \sum_{j=0}^{k-1} \exp\left(\frac{2\pi i j (H-t)}{k}\right) = \begin{cases} 1  & {\rm if} \ \ H \equiv t \! \pmod{k} \\ 0  & {\rm if} \ \ H \not \equiv t \! \pmod{k} \end{cases},$$
and hence the expression
$$\frac{1}{k} \sum_{t=\alpha}^{\beta} \sum_{j=0}^{k-1} \exp\left(\frac{2\pi i j (H-t)}{k}\right)$$
is 1 if $H$ is congruent mod $k$ to one of the integers in $\cI_l$
and 0 otherwise.  We therefore have
\begin{equation}
\label{Theta} \Theta(k,l,m) = \frac{1}{k} \sum_{j=0}^{k-1} \gamma_j
\sum_{\substack{0 \leq h \leq k-1 \\ (h,k)=1}} \Omega_{h,k}
\exp\left(\frac{2\pi i}{rsk}((r_ks_km+rsj)H_{h,k}-rsnh)\right),
\end{equation}
where
$$\gamma_j = \sum_{t=\alpha}^{\beta} \exp\left(\frac{-2\pi i j t}{k}\right).$$
By summing this geometric progression, we find that
$$|\gamma_j| \leq \min\left(\beta-\alpha, \frac{1}{\sin(\pi j /k)}\right) \leq \min(k, \tfrac{1}{2}||j/k||^{-1}),$$
where $||\cdot||$ denotes the distance to the nearest integer.  One
now easily gets (see for example Lemma 3.2 of Baker \cite{Bak:DI})
$$\sum_{j=0}^{k-1} |\gamma_j| \ll k(1+\log k),$$
and the lemma follows after taking the maximum over $j$ in the inner
summation of (\ref{Theta}).
\end{proof}

Write $24rs = AB$, where $A$ is the largest divisor of $24rs$
relatively prime to $k$, and let $\bar{A}$ denote the multiplicative
inverse of $A$ modulo $Bk$.    We note that every prime factor of
$B$ is a prime factor of $k$, whence $\gcd(h,k)=1$ if and only if
$\gcd(h,Bk)=1$.  Moreover, for each such $h$ and $k$ we can find
$H_{h,Bk}$ with the property that $hH_{h,Bk} \equiv -1 \pmod{Bk}$
and $A|H_{h,Bk}$.  These observations allow us to calculate the
$\Omega_{h,k}$ defined by (\ref{Omegadef}) rather explicitly.

\begin{lemma}
\label{Omegacalc} Suppose that $(h,k)=1$, let $A$, $B$, and
$H_{h,Bk}$ be as above, and additionally write
$\nu_k=(r_k-1)(s_k-1)$,  $\sigma_k=(r-r_k)(s-s_k)$, and
$$\Phi_{h,k} = \exp\left(\frac{48\pi i\bar{A}}{Bk}(k^2R-r_ks_k\delta_k)H_{h,Bk}\right),$$
where $R$ and $\delta_k$ are as in $(\ref{Rdelkeq})$. When $k$ is
odd one has
$$\Omega_{h,k} = \left(\! \frac{r/r_k}{s_k} \! \right) \! \left(\! \frac{s/s_k}{r_k} \! \right) \exp\left(\frac{-\pi i k \nu_k}{4r_ks_k}\right) \exp\left(\frac{-2\pi i}{kr_ks_k} (k^2\delta_k-r_ks_kR) h \right) \Phi_{h,k},$$
and when $k$ is even one has
$$\Omega_{h,k} = \left(\! \frac{s_k}{r/r_k} \! \right) \! \left(\! \frac{r_k}{s/s_k} \! \right) \exp\left(\frac{2\pi i}{kr_ks_k}(2k^2\delta_k+\tfrac{1}{8}k\sigma_k+r_ks_kR)h  \right) \Phi_{h,k}.$$
\end{lemma}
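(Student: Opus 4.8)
The plan is to rewrite $\Omega_{h,k}$ in terms of Dedekind sums and then substitute the classical closed form of the $\eta$-multiplier. From \eqref{Omegadef} and $\omega(e,f)=\exp(\pi i\,s(e,f))$,
\[
\Omega_{h,k}=\exp\!\Bigl(\pi i\bigl[\,s(h,k)+s(\tfrac{hrs}{r_ks_k},\tfrac{k}{r_ks_k})-s(\tfrac{hr}{r_k},\tfrac{k}{r_k})-s(\tfrac{hs}{s_k},\tfrac{k}{s_k})\,\bigr]\Bigr),
\]
so everything reduces to evaluating this alternating sum of four Dedekind sums modulo $2$. For each summand I would use the classical evaluation of $\omega(h,f)$ (Apostol \cite{Apostol:MF}, Chapter~3; see also Rademacher--Grosswald, \emph{Dedekind Sums}), which for $(h,f)=1$ writes $\omega(h,f)$ as a Jacobi symbol --- of type $\bigl(\tfrac hf\bigr)$ when $f$ is odd and of type $\bigl(\tfrac fh\bigr)$ when $f$ is even --- times $\exp(\pi i\,\psi(h,f))$, where $\psi(h,f)$ is an explicit rational function, linear in $h$ and in an inverse of $h$, with coefficients depending only on $f$ (the coefficient of $h$ being $-(f^2-1)/(12f)$). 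Applying this to the four factors --- moduli $k,\ k/(r_ks_k),\ k/r_k,\ k/s_k$ with numerators $h,\ hrs/(r_ks_k),\ hr/r_k,\ hs/s_k$, and the odd- or even-$f$ version chosen according to the parity of each modulus, which is what produces the two cases of the statement --- exhibits $\Omega_{h,k}$ as a product of four Jacobi symbols times $\exp(\pi i\,\Sigma)$, where $\Sigma$ is the corresponding alternating sum of the four $\psi$'s.

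For the Jacobi-symbol part I would use complete multiplicativity in the denominator together with quadratic reciprocity. Since $r$ and $s$ are square-free and coprime, $v/v_k$ is coprime to $k$ for each $v\in\{1,r,s,rs\}$, and comparing prime-power parts shows that the purely $h$-dependent symbols cancel: each prime power dividing $k$ occurs in an even number of the four symbols with matching signs. After reciprocity is used to pull $r_k$ and $s_k$ out of the numerators, what survives is $\bigl(\tfrac{r/r_k}{s_k}\bigr)\bigl(\tfrac{s/s_k}{r_k}\bigr)$ when $k$ is odd and the transposed pair $\bigl(\tfrac{s_k}{r/r_k}\bigr)\bigl(\tfrac{r_k}{s/s_k}\bigr)$ when $k$ is even. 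I expect this to be the main obstacle: the $2$-adic behaviour of the Jacobi symbol and of the $\eta$-multiplier is precisely what forces the odd/even dichotomy and decides which symbol ends up on top, and one must check that the quadratic-reciprocity correction factors (powers of $-1$ attached to $\tfrac{r_k-1}{2}\cdot\tfrac{s_k-1}{2}$ and the like) cancel against the corresponding power-of-two pieces of $\Sigma$, or vanish.

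The phase $\exp(\pi i\,\Sigma)$ I would split into three groups. The $h$-free and inverse-free terms collapse via the identity $(k-1)+\bigl(\tfrac{k}{r_ks_k}-1\bigr)-\bigl(\tfrac{k}{r_k}-1\bigr)-\bigl(\tfrac{k}{s_k}-1\bigr)=\tfrac{k(r_k-1)(s_k-1)}{r_ks_k}=\tfrac{k\nu_k}{r_ks_k}$ and contribute the factor $\exp(-\pi i k\nu_k/(4r_ks_k))$, present exactly when $k$ is odd. The terms linear in $h$ have coefficient $-\tfrac{v}{12k}\bigl(\tfrac{k^2}{v_k^2}-1\bigr)$ from the $v$-th factor, and summing these (with signs $+,+,-,-$ for $v=1,rs,r,s$) while using $(r_k^2-r)(s_k^2-s)=24\,r_ks_k\delta_k$ and $(r-1)(s-1)=24R$ gives $\exp\!\bigl(-\tfrac{2\pi i h}{kr_ks_k}(k^2\delta_k-r_ks_kR)\bigr)$ in the odd case; the even-$f$ coefficient produces instead the expression $2k^2\delta_k+\tfrac18k\sigma_k+r_ks_kR$. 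Finally, in the terms linear in an inverse of $h$ I would replace each inverse, at modulus $k/v_k$, by $-H_{hv/v_k,k/v_k}$, use the congruence $vH_{vh/v_k,k/v_k}\equiv v_kH_{h,k}\pmod{vk/v_k}$ established above to express all four through $H_{h,k}$, and then clear the residual $1/k$-type denominators: since $v/v_k$ divides the part $A$ of $24rs$ that is prime to $k$, the various inverses of $v/v_k$ can be absorbed into $\bar A$, and rewriting everything in terms of $H_{h,Bk}$ (using $48(k^2R-r_ks_k\delta_k)=2k^2(r-1)(s-1)-2(r-r_k^2)(s-s_k^2)$) packages this group exactly as $\Phi_{h,k}$.

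The other technical point worth flagging is this last step: one must verify the divisibilities ensuring that the exponent of $\Phi_{h,k}$ is a fraction with denominator dividing $Bk$ and that its value does not depend on the chosen representative of $H_{h,Bk}$ modulo $Bk$, and check the normalization $A\mid H_{h,Bk}$. Assembling the four Jacobi symbols, the constant factor, the factor linear in $h$, and $\Phi_{h,k}$ then yields the two displayed formulas, according as $k$ is odd or even.
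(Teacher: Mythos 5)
Your proposal is correct and follows essentially the same route as the paper's proof: the paper likewise expands each of the four $\omega$-factors in (\ref{Omegadef}) via the classical Jacobi-symbol evaluation of the $\eta$-multiplier (quoted there as Niven's formulas (2.3) and (2.4)), converts the modular inverses to an $H_{h,Bk}$ divisible by $A$ through the congruence $vH_{vh/v_k,k/v_k}\equiv v_kH_{h,k}$, and your constant-term, $h$-linear, and $\Phi_{h,k}$ computations agree with the paper's. The only minor divergence is that for even $k$ the paper applies the odd-numerator version of the formula uniformly to all four factors (legitimate since $hv/v_k$ is then odd because $r$ and $s$ are square-free), rather than choosing by the parity of each modulus as you suggest, which sidesteps most of the reciprocity bookkeeping you flag as the main obstacle.
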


\begin{proof}
When $v \in \{1,r,s,rs\}$, write $g=(v,k)$ and
\begin{equation}
\label{omegav} \omega_v(h,k) = \exp\left(\frac{-2\pi
i(k^2-g^2)}{24kg^3}\bigl(2hvg+(v^2h^2-g^2)H_{vh/g,k/g}\bigr)\right).
\end{equation}
Then when $k$ is odd, formula (2.4) of Niven \cite{Niven:40} gives
\begin{equation}
\label{kodd} \omega(vh/g,k/g) =
\left(\frac{-hv/g}{k/g}\right)\exp\left(\frac{-\pi i}{4}(k-1)\right)
\omega_v(h,k).
\end{equation}
When $k$ is even, the condition that $v$ is square-free implies that
$hv/g$ is odd, and hence we may apply formula (2.3) of
\cite{Niven:40} to obtain
\begin{equation}
\label{hodd} \omega(vh/g,k/g) =
\left(\frac{-k/g}{hv/g}\right)\exp\left(\frac{-\pi
i}{4}\biggl(2-\frac{hv}{g^2}(k+g)\biggr)\right) \omega_v(h,k).
\end{equation}
Since $v$ is square-free, we have $(vh/g,Bk)=1$.  Therefore, as in
the argument preceding the statement of the lemma, we may replace
each $H_{vh/g,k/g}$ in (\ref{omegav}) by an integer $H_{hv/g,Bk/g}$
divisible by $A$, and the argument leading to (\ref{HHrel2}) then
gives
$$H_{hv/g,Bk/g} \equiv \frac{g}{v} H_{h,Bk} \pmod{Bk/g},$$
where we recall that $H_{h,Bk}$ is divisible by $A$ and hence by
$v/g$.  Substituting into (\ref{omegav}) now gives
\begin{align*}
\omega_v(h,k) &= \exp\left(\frac{-\pi i}{6kg^2}(k^2-g^2)vh\right)
\exp\left(\frac{-2\pi i \bar{A}rs}{Bkvg^2}(k^2-g^2)
(vh^2-g^2)H_{h,Bk} \! \right) \\ &=
\exp\left(\frac{-\pi i} {12kg^2}(k^2-g^2)vh\right)
\exp\left(\frac{2\pi i \bar{A}rs} {Bkv}(k^2-g^2)H_{h,Bk} \!  \right),
\end{align*}
upon noting that $v|rs$, $g^2|(k^2-g^2)$, and $h^2H_{h,Bk} \equiv -h
\pmod{Bk}$. It now follows with a bit of computation that
$$\frac{\omega_1(h,k)\omega_{rs}(h,k)}{\omega_r(h,k)\omega_s(h,k)} = \exp\left(\frac{-2\pi i}{k}\biggl(R-\frac{k^2\delta_k}{r_ks_k}\biggr)hv\right) \Phi_{h,k}.$$
The lemma now follows from (\ref{kodd}) and (\ref{hodd}) via routine
calculations using the multiplicative properties of the Jacobi
symbol.
\end{proof}

We now show that the summation on the right hand side of Lemma
\ref{Thetabd1} is a Kloosterman sum with modulus $Bk$.  Fix $j=j(k,m)$ to
be the integer in the statement of Lemma \ref{Thetabd1} for which
the expression on the right is maximal and write $T(k,m)$ for the
corresponding sum, so that for each $l$ one has
\begin{equation}
\label{thetaT} \Theta(k,l,m) \ll (1+\log k) T(k,m).
\end{equation}
From the definition of the Dedekind sum (see Section 1), together with (\ref{Omegadef}), we see that
$\Omega_{h+tk, k} = \Omega_{h,k}$ for all $t \in \ZZ$.  Hence we
can write
\begin{equation*}
\label{T1}
 T(k,m) = B^{-1} \sum_{\substack{0 \leq h \leq Bk-1 \\ (h,Bk)=1}} \Omega_{h,k} \exp\left(\frac{2\pi i}{Bk}\bigl((24\bar{A}r_ks_km+jB)H_{h,Bk}-Bnh\bigr)\right),
\end{equation*}
since the definition of $B$ implies that $(h,k)=1$ if and only if
$(h,Bk)=1$. Moreover, since $Ah$ runs over a reduced residue system
modulo $Bk$ as $h$ does and since $-h^{-1} \equiv AH_{Ah,Bk} \equiv
H_{h,Bk} \pmod{Bk}$, we find that
\begin{equation}
\label{T2}
 T(k,m) = B^{-1} \! \! \! \! \! \sum_{\substack{0 \leq h \leq Bk-1 \\ (h,Bk)=1}} \! \! \! \! \! \Omega_{Ah,k} \exp\left(\frac{2\pi i}{Bk}\bigl((24\bar{A}r_ks_km+jB)\bar{A}H_{h,Bk}-ABnh\bigr)\right).
\end{equation}
We are now able to express $T(k,m)$ in terms of the Kloosterman sum
$$K(a, b; c) = \sum_{\substack{1 \leq x \leq c \\ (x,c)=1}} \exp\left(\frac{2\pi i (ax+b\bar{x})}{c}\right),$$
where $\bar{x}x \equiv 1 \pmod{c}$.  In our case $c=Bk$, and
$-H_{h,Bk}$ plays the role of $\bar{x}$.  According to Weil's bound
(see for example Iwaniec and Kowalski \cite{IK:ANT}, Corollary
11.12) one has
\begin{equation}
\label{Kbound} K(a, b; c) \ll (a,b,c)^{1/2} c^{1/2+\ep},
\end{equation}
and this delivers the bound on $\Theta(k,l,m)$ recorded in the
following lemma.

\begin{lemma}
\label{Thetabd2} One has $\Theta(k, l, m) \ll k^{1/2+\ep}$, where
the implicit constant depends at most on $\ep$, $r$, $s$, and $n$.
\end{lemma}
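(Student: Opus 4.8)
The plan is to combine the reduction (\ref{thetaT}), namely $\Theta(k,l,m) \ll (1+\log k)T(k,m)$, with Weil's bound (\ref{Kbound}) applied to the Kloosterman sum that represents $T(k,m)$. Three things need to be pinned down: that $T(k,m)$ really is a scalar multiple of a Kloosterman sum $K(a,b;Bk)$ with explicit integers $a,b$; that the gcd $(a,b,Bk)$ appearing in (\ref{Kbound}) is bounded in terms of $r,s,n$ only; and that the spurious factors $B$ and $1+\log k$ are harmless.

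First I would substitute the explicit evaluation of $\Omega_{h,k}$ from Lemma \ref{Omegacalc} into formula (\ref{T2}) for $T(k,m)$. In both the odd-$k$ and even-$k$ cases the prefactors (Jacobi symbols, and exponentials with purely imaginary argument not involving $h$) are unimodular, contributing only a constant of modulus $\le 1$. The remaining $h$-dependence is $\exp(2\pi i(\text{linear in }h)/(Bk))$, coming from the explicit exponential in Lemma \ref{Omegacalc} and from the term $-ABnh$, together with $\Phi_{Ah,k} = \exp\bigl(2\pi i\cdot 24\bar A^2(k^2R-r_ks_k\delta_k)H_{h,Bk}/(Bk)\bigr)$, after replacing $H_{Ah,Bk}$ by $\bar A H_{h,Bk} \bmod Bk$. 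Since $H_{h,Bk} \equiv -\bar h \pmod{Bk}$, the sum $T(k,m)$ becomes $B^{-1}$ times a unimodular constant times $K(a,b;Bk)$, where $a$ is the total coefficient of $h$ and $b$ is minus the total coefficient of $H_{h,Bk}$. One should check at this point that these coefficients really are integral, which uses $r_ks_k \mid k$, $24 \mid 24rs = AB$, and the square-freeness of $r$ and $s$.

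The crux is bounding $(a,b,Bk)$. I would compute $a$ modulo $k$: using $24rs=AB$ and the definitions in (\ref{Rdelkeq}) (together with $\nu_k,\sigma_k$ from Lemma \ref{Omegacalc}), the pieces of $a$ carrying a factor of $k$ collapse and one finds $a = kL - 24rs(n-R)$ for some integer $L=L(k)$. Hence if $d \mid (a,b,Bk)$ then $d \mid Bk$ and $d \mid B\cdot a = BkL - 24Brs(n-R)$; since $d \mid Bk \mid BkL$ this forces $d \mid 24Brs(n-R)$, and as $B \mid 24rs$ we conclude $d \mid 576(rs)^2(n-R)$, a fixed nonzero integer (here $n>R$ is used). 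Thus $(a,b,Bk)^{1/2} \ll_{r,s,n} 1$. The odd-$k$ and even-$k$ cases use the two formulas in Lemma \ref{Omegacalc} separately but yield the same conclusion. I expect this gcd bookkeeping — together with the degenerate sub-case $a=0$, where one instead reads off the bound from the coefficient $b$ — to be the main obstacle.

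Finally, Weil's bound (\ref{Kbound}) gives $|K(a,b;Bk)| \ll_\ep (a,b,Bk)^{1/2}(Bk)^{1/2+\ep}$, so $|T(k,m)| \ll_{r,s,n,\ep} B^{-1}(Bk)^{1/2+\ep} \le k^{1/2+\ep}$ since $B \ge 1$. Then (\ref{thetaT}) yields $\Theta(k,l,m) \ll_{r,s,n,\ep} (1+\log k)k^{1/2+\ep}$, and absorbing the logarithm by slightly enlarging $\ep$ gives the asserted bound $\Theta(k,l,m) \ll k^{1/2+\ep}$ with the implicit constant depending at most on $\ep$, $r$, $s$, and $n$.
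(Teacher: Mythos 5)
Your proposal is correct and follows essentially the same route as the paper: substitute Lemma \ref{Omegacalc} into (\ref{T2}) to write $|T(k,m)|=B^{-1}|K(a,b;Bk)|$, show via $r_ks_k\mid k$ and $k\mid\alpha_k$ that any common divisor of $a$ and $Bk$ divides a fixed nonzero multiple of $rs(n-R)$ (using $n>R$), and then conclude with Weil's bound (\ref{Kbound}) and (\ref{thetaT}), absorbing the logarithm into $k^{\ep}$. Your worry about the sub-case $a=0$ is superfluous, since the same divisibility argument already bounds every common divisor of $a$ and $Bk$ by the fixed integer even in that case; otherwise the two arguments differ only in bookkeeping constants ($576(rs)^2(n-R)$ versus the paper's $576rs(n-R)$).
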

\begin{proof}
On substituting the results of Lemma \ref{Omegacalc} (with $h$
replaced by $Ah$) into (\ref{T2}), we obtain
$$|T(m,k)|=B^{-1} |K(a,b;Bk)|$$ where $$b=24\bar{A}^2(r_ks_k(\delta_k-m)-k^2R)-j\bar{A}B,$$
and $$a=rs\left(\frac{k\alpha_k}{r_ks_k}+24R\right)-ABn,$$ where
$\alpha_k = -24k^2\delta_k$ if $k$ is odd and
$\alpha_k=48k^2\delta_k+3k\sigma_k$ if $k$ is even.
Since $r_ks_k|k$ and $k|\alpha_k$, any common divisor of $a$ and
$Bk$ must also divide the integer
$$u=24\left(ABn-24rsR\right) = 576rs(n-R).$$
In view of the hypothesis that $n > R$, we have $u \neq 0$ and hence
$(a,b,Bk) \ll_{r,s,n} 1$.  The lemma now follows from (\ref{thetaT})
and (\ref{Kbound}).
\end{proof}

\section{The error terms}\label{et}

In order to complete the analysis of $P_2(n; N)$, we require an estimate for the growth rate of the coefficients $c_{m,k}$ in (\ref{prsformula}) arising from the expansion (\ref{G*def}).  The following crude bound
will suffice for our purposes.
\begin{lemma}
\label{cmkbound} One has
\begin{equation*}
c_{m,k}  \ll e^{2\pi \sqrt{m}},
\end{equation*}
where the implicit constant is independent of $k$.
\end{lemma}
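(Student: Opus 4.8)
The plan is to bound the coefficients $c_{m,k}$ defined by the expansion $G^*(\tau) = \sum_{m\geq 0} c_{m,k}\exp(2\pi i m\tau)$, where $G^*$ is the ratio of four copies of $F$ appearing in~(\ref{Fratio}). Since $|c_m(h,k)| = |c_{m,k}|$, it is equivalent to bound the coefficients in the original product from the statement of Theorem~\ref{t2}. First I would recall that $F(\tau) = \prod_{j\geq 1}(1-e^{2\pi i j\tau})^{-1}$ is, up to the factor $e^{\pi i\tau/12}$, the Dedekind eta function reciprocal, and more importantly that $F(\tau) = \sum_{n\geq 0} p(n) e^{2\pi i n\tau}$ is the ordinary partition generating function with \emph{nonnegative} coefficients. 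The numerator factors $F(\cdot)$ contribute nonnegative coefficients, but the denominator factors must be expanded as $1/F = \prod (1-e^{2\pi i j\tau/\,\cdot\,})$, which has coefficients of mixed sign; the standard trick is to replace everything by absolute values.

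The key step is a domination argument. Writing out $G^*(\mu_z)$ as a product over the four scaled arguments, each factor is a series in a variable $q = e^{2\pi i\tau}$ (or a rational power thereof), and I would bound $|c_{m,k}|$ by the $m$-th coefficient of the product obtained by replacing every factor $(1-q^{aj})^{\pm 1}$ by $(1-q^j)^{-1}$. Concretely, since for $|q|<1$ one has, coefficient-wise, $|[\text{coeff of } (1-q^{aj})^{-1}]| \leq [\text{coeff of }(1-q^j)^{-1}]$ and $|[\text{coeff of }(1-q^{aj})]| = $ at most the corresponding coefficient of $(1-q^j)^{-1}$, the whole ratio $G^*$ is dominated coefficient-wise by a fixed power of $F(\tau)$ — say $F(\tau)^4$ or, after accounting for the rational exponents $r_ks_k/(rs)$, by $F(\tau/rs)^4 = \sum_m q^{m/rs}(\text{something})$, whose coefficients grow like those of a product of finitely many partition functions. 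The crucial point is that the number of factors (four) and the scaling are bounded independently of $k$ and $m$, so the implied constant is uniform in $k$.

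The final step is to invoke the Hardy–Ramanujan asymptotic $p(n) \sim \frac{1}{4n\sqrt3}\,e^{\pi\sqrt{2n/3}}$, or more simply the elementary upper bound $p(n) \ll e^{\pi\sqrt{2n/3}} \ll e^{2\pi\sqrt{n}}$ (since $\pi\sqrt{2/3} < 2\pi$), and the fact that a product of a bounded number of such series has $m$-th coefficient $\ll e^{C\sqrt m}$ for an absolute $C$; choosing constants so that $C \leq 2\pi$ — or simply absorbing everything into the stated clean bound $e^{2\pi\sqrt m}$ — finishes the proof. I expect the main obstacle to be making the coefficient-wise domination fully rigorous while tracking that the constant genuinely does not depend on $k$: one must check that expanding the denominator factors $F(ar_k^2\tau/(rz))^{-1}$ etc.\ and taking absolute values really does reduce to a single universal majorant series, and that the rational powers of $q$ induced by the $r_k, s_k$ scalings (which vary with $k$) only dilate the exponent by a bounded factor, so that $e^{2\pi\sqrt{m}}$ still dominates uniformly.
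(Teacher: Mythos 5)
Your argument is essentially the paper's: both proofs bound $|c_{m,k}|$ by a coefficient-wise majorant that is independent of $k$ and then invoke the Hardy--Ramanujan growth of $p(m)$. The paper works from (\ref{Fratio}) with $g(x)=\frac{(x^a;x^a)_{\infty}(x^b;x^b)_{\infty}}{(x^c;x^c)_{\infty}(x^d;x^d)_{\infty}}$, where $a,b,c,d$ are the integers $r_ks/s_k$, $s_kr/r_k$, $rs/(r_ks_k)$, $r_ks_k$; it majorizes the denominator by $(m+1)p(m)^2$ and, unlike you, uses Euler's pentagonal number theorem to bound the numerator coefficients by $O(\sqrt m)$, arriving at $c_{m,k}\ll m^{5/2}p(m)^2\ll e^{2\pi\sqrt{2/3}\,\sqrt m}$. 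Your cruder majorization of all four factors by $1/(q;q)_{\infty}$, i.e.\ by $F(\tau)^4$, also works: the $m$-th coefficient of $F(\tau)^4$ grows like $e^{\pi\sqrt{8m/3}}=e^{2\pi\sqrt{2/3}\,\sqrt m}$ (use $\sqrt{m_1}+\cdots+\sqrt{m_4}\le 2\sqrt m$ when convolving four copies of $p$), which is the same exponential rate, and since $2\pi\sqrt{2/3}<2\pi$ the stated bound follows, with uniformity in $k$ exactly because the majorant no longer sees $a,b,c,d$.

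Two cautions. First, your worry about ``rational powers'' is moot: once you pass to (\ref{Fratio})--(\ref{G*def}), as you should, all four arguments are \emph{integer} multiples of $\tau$ (squarefreeness of $r,s$ makes $r_ks/s_k$, etc., integers), so no fractional exponents ever appear and the majorant $F(\tau/rs)^4$ is not needed. Second, do not take the proposed fallback seriously: dilating $m\mapsto rs\,m$ inside $e^{c\sqrt m}$ multiplies the exponent constant by $\sqrt{rs}\ge\sqrt 6$, which would push it past $2\pi$, and an exponent constant exceeding $2\pi$ cannot be ``absorbed'' into an implicit constant. For the same reason the final step should not be waved through: one must actually check that the rate produced by the majorant, here $\pi\sqrt{8/3}$, is strictly less than $2\pi$ --- it is, so your main line of argument is sound as long as you stay with $F(\tau)^4$.
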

\begin{proof}
For simplicity, we consider the series $$g(x) = \sum_{m=0}^{\infty}
c_{m,k} x^m,$$ so that (\ref{G*def}) gives $G^*(\tau)=g(e^{2\pi i \tau})$. Then by (\ref{Fratio}) one has
$$g(x) =
\frac{(x^a;x^a)_{\infty}(x^b;x^b)_{\infty}}{(x^c;x^c)_{\infty}(x^d;x^d)_{\infty}},$$
where $a=r_ks/s_k$, $b=s_kr/r_k$, $c=rs/r_ks_k$, and $d=r_ks_k$.  We
have $$\frac{1}{(x^t;x^t)_{\infty}} = \sum_{l=0}^{\infty}
p(l)x^{tl},$$ and it follows that the coefficient of $x^m$ in
$[(x^c;x^c)_{\infty}(x^d;x^d)_{\infty}]^{-1}$ is bounded above by
$(m+1)p(m)^2$. Furthermore, by Euler's Pentagonal number theorem we
have
$$(x^t;x^t)_{\infty} = \sum_{l=-\infty}^{\infty} (-1)^l x^{tl(3l-1)/2},$$
and from this one sees that the coefficient of $x^m$ in
$(x^a;x^a)_{\infty}(x^b;x^b)_{\infty}$ has absolute value at most
$4\sqrt{m}+2$. Hence on applying the well-known Hardy-Ramanujan asymptotic formula \cite{HR:18} for $p(m)$,
we deduce that
$$c_{m,k} \ll m^{5/2} p(m)^2 \ll m^{1/2} e^{2\pi(2/3)^{1/2} \sqrt{m}} \ll e^{2\pi \sqrt{m}},$$
and the lemma follows.
\end{proof}

We are now able to show that the terms in (\ref{prsformula}) with $m
\geq \delta_k$ contribute a negligible amount.  First of all, it
follows from Lemma \ref{Thetabd2} and the definitions at the end of Section \ref{it} that
$$S_1 \ll k^{1/2+\ep} \int_{z(N+1-k)}^{z(N)} |\Psi_{m,k}(z)| \, dz, \qquad S_2 \ll k^{1/2+\ep} \int_{z(-N)}^{z(k-N-1)} |\Psi_{m,k}(z)| \, dz,$$ and
$$S_3 \ll k^{1/2+\ep} \int_{z(-N)}^{z(N)} |\Psi_{m,k}(z)| \, dz.$$
Since ${\rm Re}(z) \leq 1$ in $|z-1/2| \leq 1/2$, the definition (\ref{Psidef}) immediately gives
\begin{equation}
\label{Psibound} \Psi_{m,k}(z) \ll_{n} \exp\biggl(\frac{2\pi
r_ks_k}{rsz}\left(\delta_k-m\right)\biggr).
\end{equation}
Moreover, one has ${\rm{Re}}(1/z) \geq 1$ in the disk $|z-1/2| \leq 1/2$ and
it follows that $\Psi_{m,k}(z) \ll 1$ for all $m \geq \delta_k$. If
$\delta_k < 0$ then (\ref{Psibound}) yields $\Psi_{m,k}(z) \ll
e^{-2\pi m/(rs)}$, whereas if $\delta_k > 0$ and $m > 2\delta_k$
then we obtain $\Psi_{m,k}(z) \ll e^{-\pi m/(rs)}$.

With the above estimates in hand, it remains to bound the lengths of the various arcs of
integration.  After recalling (\ref{zldef}),  a simple calculation
reveals that
$$|z(N)-z(N-k+1)| \ll k^2/N^2 \quad \mbox{and} \quad |z(k-N-1)-z(-N)| \ll k^2/N^2,$$
while $|z(N)-z(-N)| \ll 1/N$.  Therefore, on shifting the paths of
integration from the circle to the respective chords connecting the
endpoints, we deduce from (\ref{Ebd1}), Lemma \ref{cmkbound}, and
the discussion following (\ref{Psibound}) that
\begin{align}
\label{Ebd2}
P_2(n; N) &\ll \sum_{k=1}^N (N^{-2} k^{1/2+\ep} + N^{-1} k^{-3/2+\ep})\left(1+\sum_{m > 2\delta_k} |c_{m,k}| e^{-\pi m/(rs)}\right) \notag \\
&\ll N^{-1/2+\ep}.
\end{align}
Thus on recalling (\ref{prsdecomp}) we  get
\begin{equation}
\label{P2neg} p_{r,s}(n) = P_1(n; N) + O(N^{-1/2+\ep}),
\end{equation}
and hence it suffices to analyze $P_1(n; N)$.

\section{The main terms}\label{mt}

For each $k$, we now consider the main terms (if any) with $0 \leq m
< \delta_k$.  Recall that $K$ is the circle with center $1/2$ and radius $1/2$,
and let $K(-)$ denote this circle traversed in the clockwise
direction. We write
\[
\int_{z_1(h,k)}^{z_2(h,k)}
=\int_{K(-)}-\int_{0}^{z_1(h,k)}-\int_{z_2(h,k)}^{0},
\]
and use this to decompose each of the integrals in \eqref{prsformula}.
Our aim is to show that the integrals over the arcs
$[z_1(h,k),z_2(h,k)]$ in (\ref{prsformula}) can be replaced by
integration over $K(-)$,
with negligible error.  By repeating the argument leading to
(\ref{Ebd1}), we find that the contribution from
$\int_{0}^{z_1(h,k)}$ and $\int_{z_2(h,k)}^0$ is at most
\begin{equation}
\label{Ebd3} P_3(n; N) = \sum_{k=1}^N \frac{1}{k^2} \sum_{0 \le m
< \delta_k} (|S_1(k,m)| + |S_2(k,m)| + |S_3(k,m)|).
\end{equation}
Since the coefficient of $1/z$ in the exponent of $\Psi_{m,k}(z)$ is
positive when $m < \delta_k$, we keep the path of integration on the
circle, where we have ${\rm Re}(1/z)=1$, and hence (\ref{Psibound})
gives $\Psi_{m,k}(z) \ll 1$.  Finally, it is easy to show (see for
example the proof of Apostol \cite{Apostol:MF}, Theorem 5.9) that
each of the arcs $[z(N-k+1), z(N)]$, $[z(-N), z(k-N-1)]$, and
$[z(-N), z(N)]$ has length $O(k/N)$.  It therefore follows from
(\ref{Ebd3}) and Lemma \ref{Thetabd2} that
\begin{equation}
\label{Ebd4} P_3(n; N) \ll N^{-1} \sum_{k=1}^N k^{-1/2+\ep} \ll
N^{-1/2+\ep}.
\end{equation}
On letting $N \to \infty$, we deduce from (\ref{P2neg}) and
(\ref{Ebd4}) that
\begin{equation*}\label{prseq5}
p_{r,s}(n)=
\sum_{k=1}^{\infty}\sum_{m =0}^{\lfloor \delta_k \rfloor} \!
A_{k,m}(n)\frac{i}{k^2}
 \int_{K(-)} \! \! \! \!
\exp\left(\frac{2\pi r_ks_k}{rsz} \left(\delta_k-m \right)+
\frac{2\pi z}{k^2}\left(n-R \right)\right)dz,
\end{equation*}
where $A_{k,m}(n)$ is as in the statement of Theorem \ref{t2}.
It remains to express the integral over $K(-)$ in terms of modified Bessel
functions of the first kind.  Setting $w=1/z$ gives
\begin{multline*}
p_{r,s}(n) = \sum_{k=1}^{\infty}\sum_{0 \leq m < \delta_k}
A_{k,m}(n) \frac{i}{k^2} \\
\times \int_{1-i\infty}^{1+i\infty}\ \exp\left(\frac{2\pi w r_ks_k}{rs}
\left(\delta_k-m \right)+ \frac{2\pi
}{k^2w}\left(n-R\right)\right)\frac{-1}{w^2}dw.
\end{multline*}
We now set
\[
t=\frac{2\pi wr_ks_k}{rs} \left(\delta_k-m \right) \qquad \mbox{and} \qquad
c=\frac{2\pi r_ks_k}{rs} \left(\delta_k-m\right)
\]
to get
\begin{multline}\label{prseq55}
p_{r,s}(n)= \sum_{ k=1}^{\infty}\sum_{0\leq m <\delta_k}
A_{k,m}(n)\frac{2\pi c}{k^2}
\\
\times\frac{1}{2\pi i}\int_{c-i\infty}^{c+i\infty}t^{-2}\exp\left(t+
\frac{4\pi^2r_ks_k}{k^2rs} \left(\delta_k-m
\right)\left(n-R\right)\frac{1}{t}\right)dt.
\end{multline}
Lastly, we use the formula
\[
I_{\nu}(z)=\frac{(z/2)^{\nu}}{2\pi i}
\int_{c-i\infty}^{c+i\infty}\!\!t^{-\nu -1}
\exp\left(t+\frac{z^2}{4t} \right)dt
\]
(see Watson \cite{Watson:BF}) with $\nu=1$ and
\[
\frac{z}{2}=\left[ \frac{4\pi^2 r_ks_k}{k^2rs} \left(\delta_k-m
\right)\left(n-R\right) \right]^{1/2}
\]
to get, after some simplification,
\begin{multline}\label{prseq555}
p_{r,s}(n)=\sum_{k=1}^{\infty}\sum_{0\leq m <\delta_k} \frac{2\pi
A_{k,m}(n)}{k}\sqrt{\frac{r_ks_k(\delta_k-m)}{rs(n-R)}} \\
 \times
 I_1\left(\frac{4\pi}{k}\sqrt{\frac{r_ks_k}{rs}\left(\delta_k-m
\right)\left(n-R\right)}\right).
\end{multline}
The proof of Theorem \ref{t2} is now complete.

\section{Convergence behaviour}\label{cb}

Obtaining a bound for the error in using the $N$th partial sum in
Rademacher's series to estimate $p(n)$ is a difficult problem, and
 bounding the error in using the $N$th partial sum
of the series at \eqref{prseq555} to estimate $p_{r,s}(n)$ is likely to
be at least as difficult. We do not attempt  an analysis of this
problem in the present paper. However, we do examine a particular
numerical example, to get a feel for the speed and the nature of the
convergence.

\begin{table}[ht]  \begin{center}\label{Ta:t1}    \begin{tabular}{| c | c |c| }
       \hline
        $N$   &   $S_N$ &  $p_{14,15}(500)-S_N$ \\
       \hline  &    &    \\
 1       &310093947025049932429.8505 &$-2.374319315\times10^7$ \\
2       &310093947025073675628.9283  &5.9283       \\
3   &310093947025073675414.3591 &$-208.6409$   \\
4   &310093947025073675623.3258 &0.3258        \\
5    &310093947025073675623.3258 &0.3258  \\
6   &310093947025073675623.3723 &0.3723       \\
7    &310093947025073675623.3723 &0.3723  \\
8     &310093947025073675623.3723 &0.3723        \\
   9    &310093947025073675623.2793 &0.2793  \\
    10   &310093947025073675623.2793 &0.2793  \\
  11    &310093947025073675623.4447 &0.4447  \\
    \hline
     \end{tabular}
     \caption{The fast initial convergence of the
series for $p_{14,15}(500)$.}
       \end{center}
\end{table}

In the case examined ($r=14$, $s=15$, $n=500$), the convergence of
the series is initially very fast, while it seems that once the
partial sums of the series get to within 1.0 of the correct value,
that convergence then proceeds much more slowly, with (for $k \geq r
s$) the greatest contributions to the sum of the series coming from
those terms with $k \equiv 0 \pmod{rs}$, and with the contributions
from the terms for the other $k$ being negligible in comparison.
However, it is possible that the convergence behaviour may be
different, if $r$ and $s$ have a different number of prime factors
than in the example.

As an illustration of the convergence  behaviour,  we consider the
convergence of the sum of the series to
$$p_{14,15}(500)=310,093,947,025,073,675,623,$$ by examining the
difference  $p_{14,15}(500)-S_N$, where $S_N$ is the $N$th partial
sum of the series. We tabulate the values for $1\leq N \leq 11$ in
Table \ref{Ta:t1} to show the very fast convergence initially.

Note
that the terms in the series corresponding to $k=5$, $k=7$ and
$k=10$ are zero, since $\delta_5, \delta_7, \delta_{10}<0$, so that
each of the inner sums over $m$ are empty, and thus contribute zero
to the value of the series (the term in the series corresponding to
$k=8$ is also zero, but this is because the terms in the inner sum
over $m$ add to zero).

We next plot (Figure \ref{fig3}) the values for $1\leq N\leq 750$
(the large initial values lie outside the range of the plot) to show
how the terms in the sum corresponding to $k=210, 420, 630, \dots$
(multiples of $r\times s = 14\times 15$) contribute much more to the
value of tail of the series than values of $k \not \equiv 0
\pmod{rs}$.

\begin{figure}[htbp]
\centering \epsfig{file=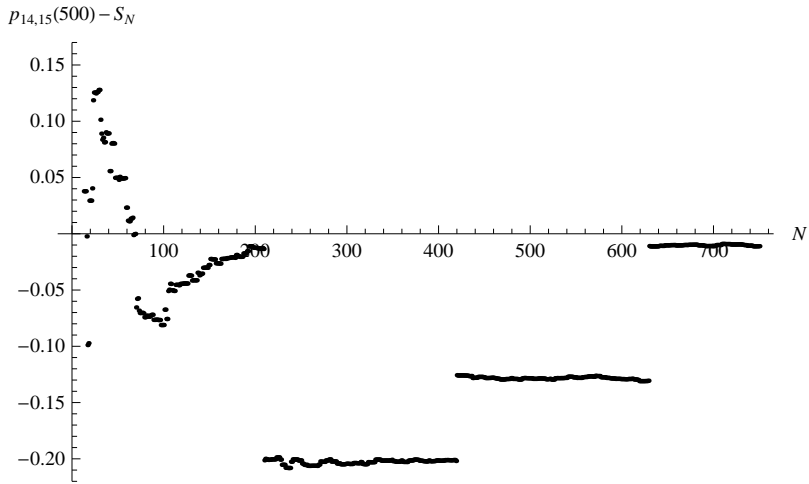, width= 300pt} \caption{The
convergence of the series to $p_{14,15}(500)$. The plot shows the
values of $p_{14,15}(500)-S_N$, where $S_N$ is the $N$th partial
sum of the series. Note the jumps in the values of the partial sums
for $k=210, 420$ and $630$.} \label{fig3}
\end{figure}

\begin{figure}[htbp]
\centering \epsfig{file=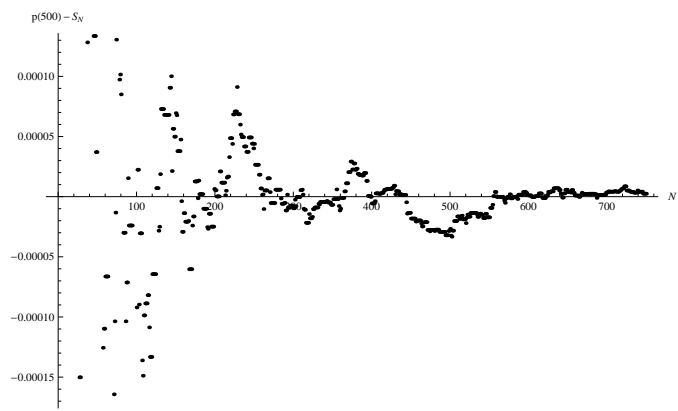, width= 250pt} \caption{The
convergence of Rademacher's series to $p(500)$. Note the more
erratic convergence behaviour, compared that of the series for
$p_{14,15}(500)$.} \label{fig3a}
\end{figure}

We remark that this apparent step-like convergence behaviour of the
series for $p_{r,s}(n)$ is in contrast to the apparent convergence
behaviour of the Rademacher series for $p(n)$, which is more erratic.
Figure \ref{fig3a} is a plot of the difference $p(500)-S_N$, $1\leq
N \leq 750$, where  $S_N$ is the $N$th partial sum of Rademacher's
series.

\begin{figure}[htbp]
\centering \epsfig{file=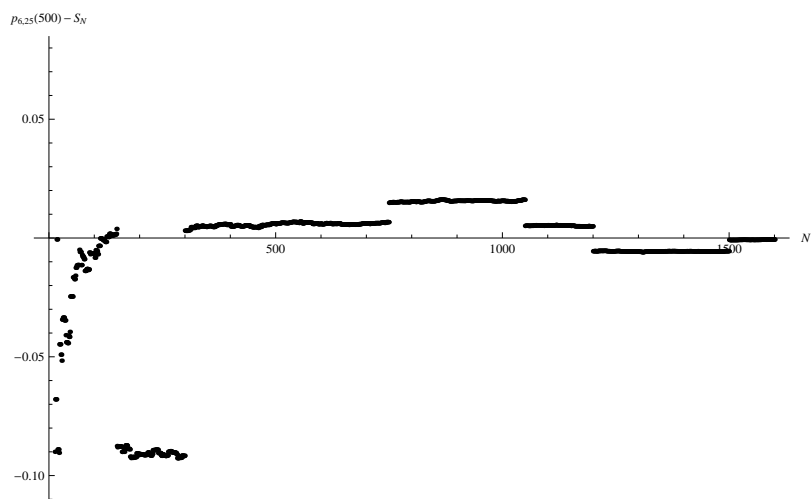, width= 300pt}
\caption{The  series corresponding to $p_{6,25}(500)$ appears to
converge to $p_{6,25}(500)$, despite the fact that 25 is a square. }
\label{fig3b}
\end{figure}

We conclude by remarking that experimental evidence suggests that
the requirement that $r$ and $s$ be square-free may be dropped,
although it is not possible to employ the arguments used to get the
Kloosterman sum estimates in this case. For example, seven terms of
the series for $$p_{6,25}(500)=42,305,606,435,448,427,065$$ appear to
be sufficient to get within $0.5$ of $p_{6,25}(500)$. Figure
\ref{fig3b} is a plot of difference $p_{6,25}(500)-S_N$, $1\leq N
\leq 1600$, where once again $S_N$ is the $N$th partial sum of the
series in Theorem \ref{t2}.

 Note that the convergence of the series
for $p_{6,25}(500)$ exhibits the same step-like behaviour seen above
 in the convergence of the series for $p_{14,15}(500)$,
 with the steps this time being multiples of $150=6\times 25$.


\begin{thebibliography}{100}

\bibitem{Apostol:MF}
T.~M. Apostol, \emph{{M}odular functions and {D}irichlet series in number
  theory}, Springer, 1990.

\bibitem{Bak:DI}
R.~C. Baker, \emph{{D}iophantine inequalities}, Clarendon Press, Oxford, 1986.

\bibitem{BO11}
K. Bringmann and K. Ono, \emph{Coefficients of harmonic Maass
forms}, Proceedings of the 2008 University of Florida Conference on
Partitions, q-series, and Modular Forms, Developments in Mathematics
series, Springer, to appear.

\bibitem{Hab:41}
M.~Haberzetle, \emph{On some partition functions}, Amer. J. Math.
\textbf{63} (1941), 589–-599.

\bibitem{Hagis:62}
P.~Hagis, \emph{A problem on partitions with a prime modulus $p \geq 3$},
  Trans.\ Amer.\ Math.\ Soc.\ \textbf{102} (1962), 30--62.

\bibitem{Hagis:63}
P.~Hagis, \emph{Partitions into odd summands}, Amer. J. Math.
\textbf{85} (1963), 213–-222.

\bibitem{Hagis:64}
P.~Hagis, \emph{On a class of partitions with distinct summands},
Trans. Amer. Math. Soc. \textbf{112} (1964), 401–-415.

\bibitem{Hagis:64b}
P.~Hagis, \emph{Partitions into odd and unequal parts}, Amer. J.
Math. \textbf{86} (1964), 317–-324.

\bibitem{Hagis:65}
P.~Hagis, \emph{ On the partitions of an integer into distinct odd
summands}, Amer. J. Math. \textbf{87} (1965), 867–-873.

\bibitem{Hagis:66}
P.~Hagis, \emph{Some theorems concerning partitions into odd
summands}, Amer. J. Math. \textbf{88} (1966), 664–-681.

\bibitem{Hagis:71}
P.~Hagis, \emph{Partitions with a restriction on the multiplicity of the
  summands}, Trans.\ Amer.\ Math.\ Soc. \textbf{155} (1971), 375--384.

\bibitem{Hagis:71b}
P.~Hagis, \emph{Partitions into unequal parts satisfying certain
congruence conditions}, J. Number Theory \textbf{3} (1971), 115-–123.

\bibitem{HR:18}
G.~H. Hardy and S.~Ramanujan, \emph{Asymptotic formulae in combinatory
  analysis}, Proc.\ London Math.\ Soc. (2)\ \textbf{17} (1918), 75--115.

\bibitem{H42}
L. K.~Hua \emph{On the number of partitions of a number into unequal
parts}, Trans. Amer. Math. Soc. \textbf{51} (1942), 194–-201.

\bibitem{I:59}
S.~Iseki, \emph{A partition function with some congruence condition},
Amer. J. Math. \textbf{81} (1959), 939–-961.

\bibitem{I:60}
S.~Iseki, \emph{On some partition functions}, J. Math. Soc. Japan
\textbf{12} (1960), 81–-88.

\bibitem{I:61}
S.~Iseki, \emph{Partitions in certain arithmetic progressions}, Amer.
J. Math. \textbf{83 }(1961), 243–-264.

\bibitem{IK:ANT}
H.~Iwaniec and E.~Kowalski, \emph{Analytic number theory}, American
  Mathematical Society, 2004.

\bibitem{Lehner:41}
J.~Lehner, \emph{A partition function connected with the modulus five}, Duke
  Math. J. \textbf{8} (1941), 631--655.

\bibitem{L45}
J.~Livingood, \emph{A partition function with the prime modulus
$P>3$}, Amer. J. Math. \textbf{67} (1945), 194–-208.

\bibitem{Niven:40}
I.~Niven, \emph{On a certain partition function}, Amer. J. Math.
\textbf{62} (1940), 353--364.

\bibitem{PS01}
A. V. M. Prasad and  V. V. S. Sastri, \emph{H.R.R. series for
certain F-partitions using Ford circles}, Ranchi Univ. Math. J.
\textbf{31} (2000), 51–-63.

\bibitem{Rad:37}
H.~Rademacher, \emph{On the partition function $p(n)$}, Proc. London
Math. Soc. (2) \textbf{43} (1937), 241–-254.

\bibitem{S72}
V. V. S. Sastri, \emph{Partitions with congruence conditions}, J. Indian
Math. Soc. (N.S.) \textbf{36} (1972), 177–-194.

\bibitem{SV82}
V. V. S. Sastri and S.  Vangipuram,
 \emph{A problem on partitions with
congruence conditions}, Indian J. Math. \textbf{24} (1982), no. 1-3,
165–-174.

\bibitem{S10a}
A.~Sills, \emph{Towards an automation of the circle method}, Gems in
experimental mathematics, 321–-338, Contemp. Math., \textbf{517},
Amer. Math. Soc., Providence, RI, 2010.

\bibitem{S10b}
A.~Sills, \emph{Rademacher-type formulas for restricted partition and
overpartition functions}, Ramanujan J. \textbf{23} (2010), no. 1-3,
253–-264.

\bibitem{S10c}
A.~Sills, \emph{A Rademacher type formula for partitions and
overpartitions}, Int. J. Math. Math. Sci. 2010, Art. ID 630458, 21
pp.

\bibitem{Watson:BF}
G.~B. Watson, \emph{A treatise on the theory of {B}essel functions},
2nd ed., Cambridge University Press, 1944.


\end{thebibliography}
\end{document}